\documentclass[11pt]{article}

\usepackage[a4paper,margin=1in]{geometry}
\usepackage{amsmath,amssymb,amsthm,mathtools}
\usepackage{booktabs,array,enumitem,microtype}
\usepackage[hidelinks]{hyperref}

\newtheorem{theorem}{Theorem}[section]
\numberwithin{equation}{section}
\newtheorem{lemma}[theorem]{Lemma}
\newtheorem{proposition}[theorem]{Proposition}

\theoremstyle{definition}

\theoremstyle{remark}
\newtheorem{remark}[theorem]{Remark}

\newcommand{\F}{\mathbf F}
\newcommand{\Z}{\mathbf Z}
\newcommand{\Q}{\mathbf Q}
\newcommand{\A}{\mathbf A}
\newcommand{\bm}[1]{\boldsymbol{#1}}

\newcommand{\eps}{\varepsilon}

\title{The Strong 24-Conjecture}
\author{Bo He\\[4pt]
\small 1. Mathematisches Institut, Georg-August-Universit\"at G\"ottingen,\\
\small Bunsenstra\ss e 3--5, 37073 G\"ottingen, Germany\\[3pt]
\small 2. Institute of Applied Mathematics, Aba Teachers University, \\
\small Wenchuan, Sichuan 623002, P. R. China\\[3pt]
\small E-mail: \texttt{bo.he@mathematik.uni-goettingen.de}; \texttt{bhe@live.cn}}
\date{}

\begin{document}
\maketitle

\begin{abstract}
Restricted forms of Lagrange's four-square theorem, in which the representing
variables are required to satisfy additional arithmetic conditions, have been
studied from several points of view.  Among the refinements proposed by
Zhi-Wei Sun is the conjecture that every nonnegative integer has a representation
\[
 N=x^2+y^2+z^2+w^2,\qquad x,y,z,w\in\mathbb Z_{\ge0},
\]
for which
\[
 x\ \text{and}\ x+24y\ \text{are perfect squares}.
\]
We prove this conjecture for all sufficiently large integers.  The proof starts
from an elementary parametrization of the two square conditions and combines a
semi-linear two-squares sieve with Poisson summation, finite-field cancellation,
and switching.
\end{abstract}

\medskip
\noindent\textit{2020 Mathematics Subject Classification.} 11N35, 11P21, 11E25, 11T23.

\noindent\textit{Keywords.} Four squares, Sun's conjecture, binary quartic, sums of two squares, semi-linear sieve.

\section{Introduction}

Lagrange's theorem has the rare feature of having no exceptional integers:
every nonnegative integer is a sum of four squares.  Once a representation is
known to exist so uniformly, it is natural to ask how much extra arithmetic
structure can be imposed on the representing variables.  This point of view
has led to a substantial collection of restricted four-square problems, in
which a linear form or a polynomial in the variables is required to be a
square, a power, or to satisfy some other prescribed condition.  Sun developed
this theme systematically in \cite{Sun2017,Sun2019}.

The problem considered here was formulated by Sun in 2017 and is usually
called the $24$-conjecture \cite{Sun24}.  It asks whether, for every
$N\ge0$, one can choose nonnegative integers $x,y,z,w$ such that
\begin{equation}\label{eq:intro-four-squares}
 N=x^2+y^2+z^2+w^2
\end{equation}
and, simultaneously,
\begin{equation}\label{eq:intro-square-conditions}
 x\ \text{and}\ x+24y\ \text{are perfect squares}.
\end{equation}
The point is not merely to find a four-square representation, but to make two
linked quantities square at the same time.  Sun reported that Q.-H. Hou had
verified the conjecture for all $N\le 10^{10}$ \cite{Sun24}.  The simultaneous
condition nevertheless changes the nature of the problem: the classical
theory of quadratic forms does not by itself separate the two square
constraints.

Our main result is the asymptotic form of the conjecture.

\begin{theorem}\label{thm:main-asymptotic}
There is an absolute constant $N_0$ such that every integer $N\ge N_0$ admits
nonnegative integers $x,y,z,w$ satisfying \eqref{eq:intro-four-squares} and
\eqref{eq:intro-square-conditions}.
\end{theorem}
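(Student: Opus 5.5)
The plan is to parametrize the two square conditions, so that the problem becomes showing that a binary quartic-type expression in $N$ and two parameters is often a sum of two squares.

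\emph{Parametrization.} Write $x=a^2$ and $x+24y=b^2$ with $b\ge a\ge0$ and $b^2\equiv a^2\pmod{24}$. Then $y=(b^2-a^2)/24$, and \eqref{eq:intro-four-squares} becomes
\[
 N-F(a,b)=z^2+w^2,\qquad F(a,b)=a^4+\frac{(b^2-a^2)^2}{576}.
\]
The congruence $b^2\equiv a^2 \pmod{24}$ holds exactly when $a\equiv \pm b \pmod 2$, $\gcd$-conditions modulo $3$ agree, and so on. Concretely, it holds when $3\mid a\iff 3\mid b$ and $a,b$ have the same parity with a $\bmod 8$ condition on squares. All these conditions are finitely many residue classes modulo $24$, and I fix a convenient one. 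Writing $b^2-a^2=(b-a)(b+a)$ and setting $u=b-a$, $v=b+a$ with suitable congruences makes $F$ a positive definite binary quartic form $Q(u,v)$ with rational coefficients.

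So it suffices to show that
\[
 S(N)=\sum_{(u,v)\in\mathcal R_N} r_0\bigl(N-Q(u,v)\bigr)>0,
\]
where $\mathcal R_N$ is the region $Q(u,v)\le N$ intersected with the residue classes, and $r_0(m)$ is a weighted indicator of $m$ being a sum of two squares. The region contains about $N^{1/2}$ lattice points. The values $N-Q$ have size $N$, and a sum of two squares has density about $(\log N)^{-1/2}$, so we expect $S(N)\gg N^{1/2}(\log N)^{-1/2}$.

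\emph{Semi-linear sieve.} Before sieving, I choose congruence conditions at $2$ (and possibly $3$) on $(u,v)$ so that $N-Q(u,v)$ has no local obstruction. That is, its $2$-part is $\equiv 1\pmod 4$ after removing powers of $2$; this is where the specific constant $24$ should make a good class available for every $N$.

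I then apply the semi-linear (dimension $1/2$) sieve to the sequence $\mathcal A=\{N-Q(u,v)\}$, sifting by primes $p\equiv 3\pmod 4$. This gives a lower bound of the expected order, provided a level of distribution $D=N^{1/2-\eps}$ (or anything above $N^{\theta}$ with $\theta$ sufficient for the half-dimensional sieve) is available for
\[
 \#\{(u,v)\in\mathcal R_N: d\mid N-Q(u,v)\}=\frac{\rho_N(d)}{d^2}\,|\mathcal R_N|+E_d .
\]
Here $\rho_N(d)$ counts the solutions of $Q\equiv N\pmod d$. Because the count is only about $N^{1/2}$ in total, one cannot handle an individual $d$ near $N^{1/2}$ trivially. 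Instead one needs $\sum_{d\le D}|E_d|\ll N^{1/2}(\log N)^{-A}$.

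\emph{Error terms.}
\begin{itemize}
\item For $d\le N^{1/4-\eps}$, I apply Poisson summation in both variables, one per side length about $N^{1/4}$. The remainder becomes a sum over nonzero frequencies of complete exponential sums $\sum_{Q(u,v)\equiv N\ (d)} e\bigl((hu+kv)/d\bigr)$.
\item For prime moduli, these are bounded by Deligne/Weil estimates for the curve $Q=N$ over $\F_p$. This curve is a genus-$3$ plane quartic, absolutely irreducible for $p\nmid 6N\,\mathrm{disc}$, which gives square-root cancellation $O(p^{1/2})$. For general squarefree $d$ the bound follows by multiplicativity. Prime powers and primes dividing $N$ need separate elementary treatment.
\item To go beyond $N^{1/4}$ up to the needed level, I use switching. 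For $d\mid N-Q(u,v)$ with $d>N^{1/2}$, I replace $d$ by the complementary divisor $(N-Q)/d$, which is small. I also exploit that in the semi-linear sieve one may restrict to well-factorable or bilinear remainder structure. This lets the large-modulus range be reduced to the Poisson-and-cancellation range.
\end{itemize}

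\emph{Main obstacle.} The crux is the level of distribution. Poisson with Weil-type cancellation naturally gives level about $N^{1/4}\cdot N^{1/8}$ at best for a two-variable family of total size $N^{1/2}$. The semi-linear sieve, however, needs a level whose exponent exceeds a threshold relative to the log-size of the sequence, roughly $D>|\mathcal A|$-dependent, with sieve limit $\beta=1$ for dimension $1/2$. I would try first to combine Poisson in one variable, keeping the other as an outer sum, with the switching trick. The aim is to gain the extra range, and to verify that the resulting exponent clears the sieve threshold. The final step is positivity of the singular series uniformly in $N$ for the chosen residue class, which is a finite local check.
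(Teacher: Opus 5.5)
Your parametrization is sound: with $u=b-a$, $v=b+a$ and the rescaling $u\mapsto 12u$, $v\mapsto 2v$ it is exactly the paper's $F(u,v)=(v-6u)^4+u^2v^2$. Choosing local conditions at $2$ so that the odd part is $\equiv1\pmod 4$ is also the right idea. But the proposal stops exactly where the proof has to start. It never produces a level of distribution beyond the Poisson--Weil barrier, and it never explains why survivors of a sieve whose sifting range is below $X^{1/2}$ ($X=N$) are sums of two squares.

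Concretely, pointwise Weil gives $|r_d|\ll d^{1/2+o(1)}$, hence $\sum_{d\sim D}|r_d|\ll D^{3/2+o(1)}$ against a main term of size $X^{1/2}$. This stops at $D=X^{1/3}$, not $X^{3/8}$. For $d\le X^{1/4}$ essentially no exponential sums are needed at all. Your switching $d\mapsto (N-Q)/d$ for $d>X^{1/2}$ never touches the range $X^{1/3}<d<X^{1/2-\eps}$ that your stated level requires. In any case, sieve weights $\lambda_d$ do not pass to the complementary divisor, so this is a divisor-problem device, not a sieve device. Finally, a level $X^{1/2-\eps}$ for a sequence with only $X^{1/2}$ terms is not a realistic target.

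The paper proceeds differently. It sieves only up to $z_0=X^{1/\sigma}$, with $\sigma=18/5+\mu$, at level $X^{3/7-\lambda}$. Parity ($n\equiv1\pmod4$) together with $z_0^4>X$ shows that every bad survivor has the form $n=bpq$ with $z_0<p\le q$ and $p,q\equiv3\pmod4$. These are removed by switching on the large prime $q$, using an amplified fourth-moment (dispersion) bound; that is where switching actually enters. The level $X^{3/7-\lambda}$ is reached in two steps:
\begin{enumerate}
\item[(i)] The well-factorable structure of the semi-linear support ($p_1p_2^2\le X^{\rho_2}$) yields either a divisor in $[X^\kappa T(d),U(d)]$, or a marked-prime factorization that is handled by four-frequency correlation bounds with an amplifier.
\item[(ii)] In the remaining wedge $p_1>U(d)$, a two-dimensional Type-I estimate is proved via the $+uv$ shift and H\"older. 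Its input is a Goursat-type moment bound, which rests on the rank-$12$ Fourier sheaf of the quartic having geometric monodromy $\mathrm{Sp}_{12}$ and a uniformly finite affine-homothety stabilizer.
\end{enumerate}

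None of these ingredients, nor any substitute for them, appears in your plan. Your ``main obstacle'' paragraph itself concedes that the exponent needed to clear the sieve threshold is not verified. As it stands, this is an outline of the setup rather than a proof.
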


The first step is elementary and is the reason the coefficient $24$ is
tractable.  The identity
\[
 (v+6u)^2-(v-6u)^2=24uv
\]
allows us to put
\[
 x=(v-6u)^2,\qquad y=uv.
\]
Then $x+24y=(v+6u)^2$ automatically.  The two square conditions have therefore
been absorbed before any analytic argument begins.  What remains is to find
$u,v,z,w$ with
\begin{equation}\label{eq:intro-quartic}
 N=F(u,v)+z^2+w^2,
 \qquad
 F(u,v)=(v-6u)^4+u^2v^2.
\end{equation}
Thus a restricted four-square problem becomes a two-squares problem along the
values of one fixed binary quartic.

This reduction also explains why a sieve enters.  By the sum-of-two-squares
theorem, the remainder $N-F(u,v)$ is a sum of two squares precisely when each
prime $p\equiv3\pmod4$ occurs to even exponent.  Such primes have density one
half among the primes, so the natural sieve has dimension $1/2$.  Half-
dimensional and semi-linear sieves have long been effective when the
arithmetic obstruction is governed by sums of two squares; see
Friedlander--Iwaniec \cite[Chs.~14, 18]{FI}.  More broadly, their work on
$x^2+y^4$ and on the asymptotic sieve showed how extra analytic information can
overcome the parity obstruction for very sparse polynomial sequences
\cite{FIAnnals,FIAsymptotic}.  Related half-dimensional ideas continue to
occur in additive problems involving primes represented by sums of two squares;
see, for example, Ter\"av\"ainen \cite{Teravainen}.  A recent treatment of the
related one-square restriction $x+24y=\square$ by Wu and She uses instead the
arithmetic of ternary quadratic forms \cite{WuShe}; the simultaneous square
condition in \eqref{eq:intro-square-conditions} leads naturally to the quartic
route above.

The sieve alone does not reach the ranges needed here.  Since $u$ and $v$ have
natural size $N^{1/4}$, Poisson summation brings us to complete exponential
sums attached to the plane quartic $F(u,v)=n$.  At this point the problem
meets a second circle of ideas.  Deligne's Riemann hypothesis over finite
fields gives square-root cancellation once the relevant cohomology is
controlled \cite{Deligne}; the Fourier-transform formalism of Katz and Laumon
makes it possible to organize such complete sums as trace functions of
$\ell$-adic sheaves \cite{KatzLaumon,KatzESDE}.  Stratification results of
Fouvry and Katz then provide a systematic way to isolate the exceptional
parameter loci where generic cancellation can fail \cite{FouvryKatz}.

There are now several arithmetic applications of this circle of ideas.
Trace functions have been used to obtain power-saving estimates over the
primes \cite{FKM}, while bilinear estimates for Kloosterman sums have led to
applications to twisted second moments of cusp forms and to distribution
problems on $\mathrm{GL}_3$ \cite{KMSAnnals}.  Related geometric methods also
enter the function-field Chowla and twin-prime problems
\cite{SawinShusterman}.  More recent work develops bilinear estimates for
broader classes of trace functions and uniform stratification results in
families \cite{FKMS,BKW}.  These results provide a useful model for the role
played below by monodromy, correlation, and exceptional parameter loci.

For the quartic \eqref{eq:intro-quartic}, the finite-field Fourier transform
has generic rank $12$.  We verify that its geometric monodromy is symplectic
and that the affine symmetries relevant to the correlation problem are finite.
This gives the two-dimensional moment estimate needed in the critical Type-I
range.  Accordingly, the geometric input enters only after the elementary
parametrization, the two-squares criterion, the sieve decomposition, and
Poisson summation have reduced the original question to a concrete family of
complete quartic sums.

There remains one arithmetic obstruction.  Once the small primes
$p\equiv3\pmod4$ have been sifted out, a remainder which is not a sum of two
squares must contain large bad primes to odd exponent.  The parity imposed by
the local chart forces these primes to occur in pairs, while the size of the
sifting range leaves only a two-large-prime configuration.  We remove that
configuration by switching.  Three small positive parameters,
$\lambda$, $\mu$ and $\delta$, are kept symbolic throughout the proof.
The moment order is also left unspecified until it is needed.  This separates
the structural argument from the numerical bookkeeping.  Only in the final
section do we make a concrete choice and extract an explicit, intentionally
coarse, threshold.

Throughout the paper, $X\ll Y$ means that $|X|\le C Y$ for a constant $C$
depending only on the fixed smooth weights and fixed algebraic data.  The
notation $X^{o(1)}$ is uniform in the dyadic parameters introduced below.

\section{The quartic reduction}

The identity
\[
(v+6u)^2-(v-6u)^2=24uv
\]
gives
\[
x=(v-6u)^2,\qquad y=uv,\qquad x+24y=(v+6u)^2.
\]
Thus it suffices to find integers $u,v,z,w$ with $uv\ge0$ and
\begin{equation}
N=F(u,v)+z^2+w^2,
\qquad
F(u,v):=(v-6u)^4+u^2v^2.
\label{eq:quartic-chart}
\end{equation}

Put
\[
X:=N,\qquad P:=X^{1/4}.
\]
Fix once and for all a sufficiently small $\eta_0>0$, for example
$\eta_0=1/10$, and a nonnegative smooth weight $W$ supported in a sufficiently
narrow box
\[
u,v\asymp \eta_0P
\]
in the positive quadrant.  At the centre one has
\[
F(\eta_0P,\eta_0P)=626\eta_0^4P^4=0.0626X,
\]
so the box can be chosen so that uniformly on its support
\begin{equation}
u>0,\quad v>0,\quad N-F(u,v)\asymp X>0.
\label{eq:value-localization}
\end{equation}

We shall also use the Gaussian factorization
\begin{equation}
F(u,v)=\bigl((v-6u)^2+iuv\bigr)
       \bigl((v-6u)^2-iuv\bigr).
\label{eq:gaussian-norm}
\end{equation}

\section{Local conditions}

\subsection{A seven-chart certificate modulo $96$}

For every residue class of $N$ modulo $96$, at least one of the following
seven residue classes $(u,v)\bmod96$ is admissible:
\[
(0,0),\ (1,1),\ (3,2),\ (1,3),\ (1,8),\ (0,2),\ (0,4).
\]
Their quartic values modulo $96$ are respectively
\[
0,\ 50,\ 4,\ 90,\ 80,\ 16,\ 64.
\]
A direct check of the $96$ possible residues of $N$ shows that one may choose
a chart for which
\begin{equation}
N-F(u,v)=2^\nu n(u,v),
\qquad n(u,v)\equiv1\pmod4,
\qquad 3\nmid N-F(u,v),
\label{eq:local-chart}
\end{equation}
with the $2$-adic valuation fixed by the chart.  The modulus $32$ is essential here: modulus $16$ does not determine the
class of the odd part modulo $4$.  Each chosen residue class has positive
density in the $P$-scale box, so the local conditions are compatible with
\eqref{eq:value-localization}.

\subsection{Fixed exceptional primes}

Let $\mathcal E$ be the fixed finite set of primes excluded by the geometric,
prime-power, and resultant arguments below, together with $2$ and $3$.  Put
\[
\mathcal E_-:=\{r\in\mathcal E:r\equiv3\pmod4\}.
\]
For every $r\in\mathcal E_-$ impose
\begin{equation}
r\nmid N-F(u,v).
\label{eq:fixed-exceptional-presieve}
\end{equation}
This costs only a fixed positive density.  Indeed, if $r\nmid N$, one may use
$u\equiv v\equiv0\pmod r$; if $r\mid N$, one may use
$u\equiv0$, $v\equiv1\pmod r$.  The conditions combine with the mod-$96$
chart by the Chinese remainder theorem.

\subsection{The $N$-singular pre-sieve}

Let
\[
N_-:=\prod_{\substack{p\mid N\\p\equiv3\,(4)\\p\notin\mathcal E}}p.
\]
If $p\mid N_-$, then $p$ is inert in $\Z[i]$ and
\[
F(u,v)\equiv0\pmod p
\iff
v-6u\equiv0\pmod p,\qquad uv\equiv0\pmod p.
\]
Since $p\ne2,3$, this forces
\[
u\equiv v\equiv0\pmod p.
\]
Hence the singular deletion has density exactly $p^{-2}$ and
\begin{equation}
\prod_{p\mid N_-}\left(1-\frac1{p^2}\right)
\ge\prod_p\left(1-\frac1{p^2}\right)=\frac6{\pi^2}.
\label{eq:singular-positive}
\end{equation}
The dependence of $N_-$ on $N$ is harmless even when $N_-$ is large.  By
Möbius inversion, the error in counting the pre-sieved lattice points in a
smooth $P$-scale box is
\[
\ll P\prod_{p\mid N_-}(1+p^{-1})+\tau(N_-)
=P X^{o(1)},
\]
while the main term is of order $P^2$ times the positive product in
\eqref{eq:singular-positive}.

\begin{remark}[Parity bookkeeping]
After the two pre-sieves, every remaining prime $p\equiv3\pmod4$ that can
occur to odd valuation is a uniform good prime for the analytic geometry.
This point is necessary: fixed exceptional primes cannot simply be omitted
from the sieve product, because an odd valuation at such a prime would spoil
the final parity argument.
\end{remark}

\section{Poisson summation and correlations}

For an odd modulus $d$, define
\[
e_d(t):=\exp(2\pi i t/d)
\]
and
\[
C_d(N):=\{(u,v)\in(\Z/d\Z)^2:F(u,v)\equiv N\pmod d\}.
\]
For $\bm h\in(\Z/d\Z)^2$ put
\[
S_d(\bm h):=\sum_{\bm x\in C_d(N)}e_d(\bm h\cdot\bm x),
\qquad
K_d(\bm h):=d^{-1/2}S_d(\bm h).
\]
For a smooth physical weight of scale $P$, the centered Poisson remainder has
shape
\begin{equation}
r_d
=
\frac{P^2}{d^{3/2}}
\sum_{\bm h\ne0}
\widehat W\!\left(\frac{P\bm h}{d}\right)K_d(\bm h).
\label{eq:poisson-remainder}
\end{equation}
For coprime $r,s$,
\begin{equation}
K_{rs}(\bm h)
=K_r(\bar s\bm h)K_s(\bar r\bm h).
\label{eq:reciprocal-crt}
\end{equation}
The dual side length for a modulus $d$ is
\begin{equation}
H(d)\asymp\frac dP=\frac d{X^{1/4}}.
\label{eq:H-def}
\end{equation}
Pointwise Weil gives
\[
|r_d|\ll d^{1/2+o(1)},
\]
hence a dyadic absolute sum $D^{3/2+o(1)}$.  The Poisson barrier is therefore
$D=X^{1/3}=P^{4/3}$.

\subsection{Prime correlations}

For $p\nmid 2\cdot3\cdot577N$, define
\[
C_p(a,b;\bm t)
=
\sum_{\bm h\bmod p}
K_p(a\bm h)\overline{K_p(b\bm h)}
e_p(\bm t\cdot\bm h).
\]
Fourier inversion gives
\begin{equation}
C_p(a,b;\bm t)
=
p\,\#\{\bm x\in C_p(N):
 b^{-1}(a\bm x+\bm t)\in C_p(N)\}.
\label{eq:affine-correlation}
\end{equation}
If the two quartic fibers do not coincide, B\'ezout gives $O(p)$.
Coincidence forces
\[
\bm t=0,
\qquad
(a/b)^4=1.
\]
Hence
\begin{equation}
|C_p(a,b;\bm t)|\ll
\begin{cases}
p,&\bm t\ne0\text{ or }(a/b)^4\ne1,\\
p^2,&\bm t=0\text{ and }(a/b)^4=1.
\end{cases}
\label{eq:prime-dichotomy}
\end{equation}
For $p\equiv3\pmod4$, the scalar stabilizers are only $\pm1$; for
$p\equiv1\pmod4$, they form $\mu_4$.

A shifted version with $K_p(a(\bm h+\bm q))$ differs only by a unit-modulus
phase on the physical incidence set, so the same dichotomy holds uniformly in
$\bm q$.

\subsection{Four-frequency correlations}

After two Cauchy--Schwarz steps, a reciprocal Type-II block produces four
independent frequencies.  At a singleton vertex one uses
\[
\bm h_3=\bm h,
\qquad
\bm h_1=\bm h+\bm q,
\]
then completes the common base variable $\bm h$.

The complementary-divisor relations have the form
\begin{equation}
\Delta r=\ell_1s_1-\ell_2s_2,
\qquad
\Delta r'=\ell_1's_1-\ell_2's_2.
\label{eq:comp-div}
\end{equation}
Writing
\[
g=(s_1,s_2),\qquad s_i=gu_i,
\]
one obtains
\begin{equation}
r'\bar r\equiv\ell_2'\bar\ell_2\pmod{u_1},
\qquad
r'\bar r\equiv\ell_1'\bar\ell_1\pmod{u_2}.
\label{eq:ratio-laws}
\end{equation}
For $3\bmod4$ primes, a saturated prime therefore divides an amplifier
sum/difference.  With $1\bmod4$ primes included, the unified condition is
\begin{equation}
p\mid \ell'^4-\ell^4.
\label{eq:fourth-amplifier}
\end{equation}
Thus fourth-root stabilizers are arithmetically thin rather than a new local
obstruction.

\subsection{Prime powers}

Let $q=p^e$ with
\[
p\nmid2\cdot3\cdot577N.
\]
The exact incidence identity remains
\begin{equation}
C_q(a,b;\bm q,\bm u)
=q
\sum_{\substack{\bm x\in C_q(N)\\
 b^{-1}(a\bm x+\bm u)\in C_q(N)}}
e_q(a\bm q\cdot\bm x).
\label{eq:primepower-incidence}
\end{equation}
Set $t=a/b$ and $\bm\tau=b^{-1}\bm u$.  Write
\[
f(\bm X)=F(\bm X)-N,
\qquad
g(\bm X)=F(t\bm X+\bm\tau)-N,
\]
and subtract the homogeneous leading term:
\begin{equation}
\Delta(\bm X):=g(\bm X)-t^4f(\bm X),
\qquad
\deg\Delta\le3.
\label{eq:deformation-H}
\end{equation}
The cubic homogeneous part of $\Delta$ is
\[
\Delta_3=t^3D_{\bm\tau}F.
\]
The coefficients of $uv^2$ and $v^3$ in $(F_u,F_v)$ give
\[
\begin{pmatrix}434&-72\\-24&4\end{pmatrix},
\]
whose determinant is $8$.  Hence, for odd $p$, if every coefficient of $\Delta$ is
divisible by $p^k$, then
\[
p^k\mid\tau_1,\qquad p^k\mid\tau_2.
\]
Its constant term then gives
\begin{equation}
k\le \min\{e,v_p(t^4-1)\}.
\label{eq:k-control}
\end{equation}

\begin{lemma}[Uniform $p$-adic sublevel estimate]
Let $J=p^{-k}\Delta$ be primitive modulo $p$ and put $n=e-k$.  Then
\begin{equation}
\#\{\bm x\bmod p^e:
 f(\bm x)\equiv0\pmod{p^e},\ J(\bm x)\equiv0\pmod{p^n}\}
\ll p^{e-\lceil n/12\rceil}.
\label{eq:padic-sublevel}
\end{equation}
The implied constant is absolute at all good primes.
\end{lemma}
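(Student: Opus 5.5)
We work one lifting level at a time and exploit the smoothness of the curve $f=0$. The aim is to show that the $p$-adic points of $C_{p^e}(N)$ on which the primitive cubic $J$ is very small are confined to few residue classes. Since $p\nmid 2\cdot3\cdot577N$, the affine quartic $f(\bm X)=F(\bm X)-N$ is smooth modulo $p$. We will take as given that $577$ is the relevant prime factor of the discriminant of $F$, and that $N\not\equiv0$ excludes the singular point at the origin. Hence every $\bm x\bmod p^e$ with $f(\bm x)\equiv0\pmod{p^e}$ has $\nabla f(\bm x)\not\equiv0\pmod p$. By Hensel's lemma, the solution set is then a union of at most $\#C_p(N)\le 4p+O(1)$ ``tubes''. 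Each tube is parametrized by a local coordinate $s\in\Z_p$ via a $p$-adic analytic arc $\bm x=\gamma(s)$, so that $\#C_{p^e}(N)\asymp p^{e}$ and each residue of $s$ mod $p^{e-1}$ gives one point.

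Next we restrict $J$ to each arc and get a power series $\phi(s)=J(\gamma(s))\in\Z_p[[s]]$. We need an upper bound for the number of $s\bmod p^{e-1}$ with $v_p(\phi(s))\ge n$. The key input is a Weierstrass preparation/Newton polygon bound. Because $J$ is primitive of degree $\le3$ and $f$ is an irreducible quartic mod $p$ (after the usual check that $J$ is not a multiple of $f$ mod $p$), Bézout gives that $J$ and $f$ meet in at most $12$ points mod $p$, counted with multiplicity. Equivalently, $\phi$ mod $p$ is not identically zero on the union of arcs, and on each arc through a residue point its reduction has order of vanishing $m\le12$ at that point. Weierstrass preparation then writes $\phi(s_0+ps)=\text{unit}\cdot(\text{distinguished polynomial of degree } m)$ after rescaling. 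A polynomial $\prod_{j\le m}(s-\alpha_j)$ has valuation $\ge n$ only if some factor has valuation $\ge n/m\ge n/12$. So $s$ lies in one of $m$ discs of radius $p^{-\lceil n/12\rceil}$, each containing $p^{e-1-\lceil n/12\rceil}$ residues. Summing over the $\le12$ residue points where $J\equiv f\equiv0$ gives a contribution $\ll p^{e-\lceil n/12\rceil}$. Points of the arcs where $J\not\equiv0\pmod p$ contribute nothing once $n\ge1$. For $n=0$ the bound is trivial from $\#C_{p^e}(N)\ll p^e$.

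The main obstacle is uniformity together with the non-degeneracy of $J$ along the curve. We must rule out the case where $J$ vanishes identically on a component of $f=0$ mod $p$; otherwise Bézout fails and the exponent $12$ collapses. Here we use the structure of $\Delta$ from \eqref{eq:deformation-H}. If $f\mid J\pmod p$, degree considerations ($\deg J\le3<4$) force $J\equiv0$, contradicting primitivity, provided $f$ mod $p$ is irreducible or at least has no component of degree $\le3$ contained in $J=0$. We would verify absolute irreducibility of $F(u,v)-N$ away from the excluded primes using the Gaussian factorization \eqref{eq:gaussian-norm}: the only possible factorization is into two conics, and matching would force $N\equiv0$ or $p\mid 577$. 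This is presumably why these primes lie in $\mathcal E$. A second subtlety is that the Weierstrass degree must be bounded by the intersection multiplicity at the actual lifted point, not merely mod $p$. The intersection multiplicity of the $p$-adic curves at a $\Z_p$-point is bounded by the mod-$p$ local multiplicity, which by Bézout sums to $\le12$. All constants depend only on degrees, hence are absolute at good primes.
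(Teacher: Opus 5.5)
Your proposal follows the paper's proof essentially step for step: Hensel lifting on the smooth curve to parametrize each residue disc, restriction of $J$ to a local parameter, Weierstrass preparation of degree equal to the mod-$p$ intersection multiplicity $m_x$, B\'ezout's bound $\sum_x m_x\le 4\cdot 3=12$, and the root-proximity count. Two small repairs are needed, neither affecting your final bound $p^{e-\lceil n/12\rceil}$, which is correct: the preparation must be done in the unscaled parameter $t=ps$, where $\bar\phi$ has order exactly $m_x$, so each disc contains $p^{e-\lceil n/m_x\rceil}$ solutions modulo $p^e$ rather than the $p^{e-1-\lceil n/12\rceil}$ you wrote (already false for $n=1$); and geometric irreducibility of $\bar C$ follows directly from smoothness of the projective quartic $\overline C_N$ for $p\nmid 6\cdot577N$, so no conic-by-conic matching is needed (your remark that two conics is the only possible factorization also overlooks the linear-times-cubic case).
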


\begin{proof}
Modulo $p$, the smooth quartic $\bar C:f=0$ is geometrically irreducible and
$\bar J$ is a nonzero polynomial of degree at most $3$.  Hence $\bar C$ is not
a component of $V(\bar J)$, and B\'ezout gives
\[
\sum_{x\in\bar C\cap V(\bar J)}I_x(\bar C,\bar J)\le4\cdot3=12.
\]
Fix such an intersection point and let
$m_x=I_x(\bar C,\bar J)$.  Since $\bar C$ is smooth, the completed local ring
is one-dimensional.  With a local parameter $t$, Weierstrass preparation
writes $J|_C$ as a unit times a distinguished polynomial of degree $m_x$.
If $p^n\mid J(t)$, at least one root lies $p$-adically within radius
$p^{-n/m_x}$ of $t$.  Hence the residue disc contributes
\[
O\bigl(p^{e-\lceil n/m_x\rceil}\bigr)
\le O\bigl(p^{e-\lceil n/12\rceil}\bigr).
\]
Summing over the at most $12$ intersection multiplicities proves the claim.
\end{proof}

Combining \eqref{eq:primepower-incidence}, \eqref{eq:k-control}, and
\eqref{eq:padic-sublevel} gives
\begin{equation}
|C_{p^e}|
\ll
p^{2e-\lceil(e-k)/12\rceil}
\ll p^{2e-(e-k)/12}.
\label{eq:primepower-saving}
\end{equation}
Let $M$ be a modulus supported on good primes and write
\[
E_M:=\prod_{p^e\parallel M}p^{\min(e,v_p(t^4-1))},
\qquad
G_M:=M/E_M.
\]
The Chinese remainder theorem then gives, for the corresponding correlation,
\begin{equation}
|C_M|
\ll M^{2+o(1)}G_M^{-1/12}.
\label{eq:composite-primepower}
\end{equation}
On amplifier off-diagonals the four-frequency ratio laws give
\[
E_M\mid\gcd(M,\ell_1^4-\ell_2^4).
\]
Thus the prime-power affine extension is absorbed without a separate
squarefull tail.

\section{The semi-linear sieve}

The argument is clearest if the small perturbations are kept symbolic.  Fix
positive $\lambda,\mu,\delta$ and put
\begin{equation}
\rho_1=\frac12-\lambda,\qquad
\rho_2=\frac37-\lambda,\qquad
\sigma=\frac{18}{5}+\mu,
\label{eq:new-parameters}
\end{equation}
and define
\[
\zeta:=\frac1\sigma,\qquad z_0:=X^\zeta.
\]
The small switched core will later be $b\le X^\delta$.  We choose the three
parameters small and positive so that the inequalities below hold.

We use the standard lower semi-linear weights in the form recorded in
\cite{FI,Teravainen}.  Their support is denoted by $D_-$.  Thus
\begin{equation}
D_-=
\left\{p_1\cdots p_r\le X^{\rho_2}:z_0\ge p_1>\cdots>p_r,
\quad p_1\cdots p_{2j-1}p_{2j}^{2}\le X^{\rho_2}\right\}.
\label{eq:sem-support}
\end{equation}
In particular,
\begin{equation}
p_1p_2^2\le X^{\rho_2}.
\label{eq:p1p2}
\end{equation}

The normalized lower-sieve and switching constants are
\begin{align}
I_1(\rho_2,\sigma)
&=\frac1{2\sqrt{\rho_2}}
\int_1^{\rho_2\sigma}\frac{dt}{\sqrt{t(t-1)}},\label{eq:I1-new}\\
I_2(\rho_1,\sigma)
&=\frac1{2\rho_1}
\int_2^\sigma\frac{\log(t-1)}{t\sqrt{1-t/\sigma}}\,dt.
\label{eq:I2-new}
\end{align}
For the small switched core define
\[
I_{2,<\delta}
=\frac1{2\rho_1}
\int_0^\delta
\frac{\log(\sigma(1-\beta)-1)}{\sqrt\beta(1-\beta)}\,d\beta,
\]
and take the shallow level
\[
D_{\rm sh}=z_0^{1.001},\qquad
\rho_{\rm sh}=\frac{1.001}{\sigma}.
\]
The corresponding shallow replacement is
\[
J_{\rm sh}(\delta)=\frac{2\sqrt\delta}{\rho_{\rm sh}}.
\]
Define the sieve margin
\begin{equation}
\mathcal M(\lambda,\mu,\delta)
:=I_1-I_2+I_{2,<\delta}-J_{\rm sh}(\delta).
\label{eq:hybrid-margin}
\end{equation}
The main-term calculation requires only
\begin{equation}
\mathcal M(\lambda,\mu,\delta)>0.
\label{eq:positive-margin-symbolic}
\end{equation}
The role of the three perturbations is now transparent.  The parameter
$\lambda$ creates a power gap in the critical lower-sieve range; $\mu$
puts the Type-I exponent on the favorable side of its exact threshold; and
$\delta$ supplies the final saving in the switched large-core range.  We require only that they be chosen small enough for the inequalities below.

\subsection{The critical range}

For $d>X^{1/3}$ put
\begin{equation}
T(d):=\frac{d^3}{X},\qquad
U(d):=\frac{d^2}{X^{1/2}},\qquad
Y(d):=\frac{U(d)}{T(d)}=\frac{X^{1/2}}d.
\label{eq:TU-new}
\end{equation}
The new parameter slack is
\begin{equation}
3-7\rho_2=7\lambda.
\label{eq:rho-slack-new}
\end{equation}
Fix
\begin{equation}
\kappa=2\lambda,\qquad 3-7\rho_2-3\kappa=\lambda>0.
\label{eq:kappa-new}
\end{equation}

\begin{proposition}[Power-separated critical trichotomy]
Let $d=p_1\cdots p_r\in D_-$ with
$d>X^{1/3}$ and $p_1\le U(d)$.  Then either there is a divisor
$r_0\mid d$ satisfying
\[
X^\kappa T(d)\le r_0\le U(d),
\]
or there is a marked-prime factorization
\[
d=S p V
\]
with
\[
T(d)<S<T(d)X^\kappa,
\qquad
YX^{-\kappa}<p,V<Y.
\]
\end{proposition}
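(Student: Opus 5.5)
The plan is a greedy accumulation argument on the ordered prime factorization $d=p_1\cdots p_r$, $p_1>\cdots>p_r$, combined with the complementary-divisor symmetry $r\mapsto d/r$. First I record the basic sizes. Since $d\in D_-$ we have $d\le X^{\rho_2}$, and by \eqref{eq:rho-slack-new}--\eqref{eq:kappa-new} the window $[X^\kappa T(d),U(d)]$ has multiplicative width
\[
\frac{U(d)}{X^\kappa T(d)}=Y(d)X^{-\kappa}=\frac{X^{1/2-\kappa}}{d}\ge X^{1/2-\rho_2-\kappa},
\]
which is a positive power of $X$. I also note the dual window: $d/r_0$ lies in $[X^\kappa T,U]$ exactly when $r_0\in[Y,\,X^{1-\kappa}/d^2]$. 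Either window gives the first alternative, applied to $r_0$ or to $d/r_0$.

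Next I form the partial products $S_j=p_1\cdots p_j$. By hypothesis $S_1=p_1\le U(d)$, and $S_r=d$. Moreover $d>U(d)$, since $d<X^{1/2}$. If some $S_j$ lies in $[X^\kappa T,U]$, we are done. Otherwise let $j$ be the first index with $S_j>U$; it exists and satisfies $j\ge2$. Then $S:=S_{j-1}<X^\kappa T$, and the marked prime $p:=p_j$ satisfies
\[
p>\frac{U}{S}>\frac{U}{X^\kappa T}=YX^{-\kappa}.
\]
Put $V:=d/(Sp)$. It then remains to prove $S>T$, $p<Y$, $V>YX^{-\kappa}$ and $V<Y$, after possibly replacing $V$ by a suitable subproduct and running the same greedy argument on the complementary side.

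For the upper bound $p<Y$ I use the sieve-support constraint \eqref{eq:p1p2} together with its higher analogues $p_1\cdots p_{2i-1}p_{2i}^2\le X^{\rho_2}$. Since $p\le p_i$ for every $i\le j$, these give $Sp\cdot p\le X^{\rho_2}$ or a comparable bound, depending on the parity of $j$. Combined with $p>YX^{-\kappa}$, this yields $d\,Y^2X^{-2\kappa}\lesssim X^{\rho_2}$, that is, $X^{1-2\kappa}/d\lesssim X^{\rho_2}$. This inequality is compatible only because $d>X^{1/3}$ and the slack $3-7\rho_2-3\kappa=\lambda>0$ is available; the same computation should force $p<Y$. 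For $S>T$ I argue by contradiction. If $S\le T$, then $Sp\le TY=U$, so $S_j\le U$, which contradicts the choice of $j$ unless $p\ge Y$, and that case has just been excluded. For $V$, I note that $V=d/(Sp)$ and $d/U=Y$. Since $Sp>U$ we get $V<Y$. For the lower bound, $Sp<X^\kappa T\cdot Y=X^\kappa U$ gives $V>YX^{-\kappa}$.

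I expect the main obstacle to be the exponent bookkeeping in the parity-dependent support constraints: ensuring that $p<Y$ holds uniformly in the parity of $j$, and excluding the case where the jump occurs at an odd-indexed prime, where \eqref{eq:sem-support} only controls the product including $p_j$ once. I would first try to handle that case by passing to the dual window $[Y,X^{1-\kappa}/d^2]$. In the degenerate subcase the complementary product $d/S_{j-1}$ should land there, since its width is again a power $X^{\lambda}$-type gap coming from \eqref{eq:kappa-new}.
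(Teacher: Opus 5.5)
Your greedy skeleton is sound, but the step everything rests on, $p=p_j<Y$, is neither proved nor true in general, and your arguments for $S>T$ and $V>YX^{-\kappa}$ both use it. The support condition does not give it. At best (even $j$) it yields $S_jp_j\le X^{\rho_2}$, hence only $p_j<X^{\rho_2}/U=Y\cdot X^{\rho_2}/d$, which exceeds $Y$ whenever $d<X^{\rho_2}$. Your displayed consequence $dY^2X^{-2\kappa}\lesssim X^{\rho_2}$ would need $Sp\asymp d$. Moreover, since $dY^2=X/d$, it says $d\gtrsim X^{1-2\kappa-\rho_2}$, which contradicts $d\le X^{\rho_2}$.

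Here is a concrete counterexample to the step. Take $d=X^{0.41}$ with $p_1=X^{0.225}$, $p_2=X^{0.1}$, $p_3=X^{0.085}$. For small $\lambda,\kappa$ this lies in $D_-$, since $p_1p_2^2=X^{0.425}$ and $p_1<z_0$, and it satisfies $p_1\le U$. Here $T=X^{0.23}$, $U=X^{0.32}$, $Y=X^{0.09}$. Your crossing index is $j=2$, with $S=p_1<T$, $p=p_2>Y$ and $V=p_3<YX^{-\kappa}$, so the factorization your greedy produces satisfies none of the marked conditions. Your proposed fallback also fails: $d/S_{j-1}=X^{0.185}$ is not in $[X^{0.23+\kappa},X^{0.32}]$. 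The proposition survives here only because $d/p_2=p_1p_3=X^{0.31}$ lies in the window.

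The missing idea is to apply the dual window to single primes, combined with the $p_1p_2^2$ computation, which is the ingredient the paper's proof uses. If $Y\le p_i\le Y^2X^{-\kappa}$ for some $i$, then $r_0=d/p_i\in[X^\kappa T,U]$, because $d/Y=U$ and $d/Y^2=T$. If instead $p_i>Y^2X^{-\kappa}=d/(TX^\kappa)$, then necessarily $i=1$. Indeed, two such primes would give, using $d\le X^{\rho_2}$,
\[
p_1p_2^2>\bigl(X^{1-\kappa-2\rho_2}\bigr)^3=X^{3-6\rho_2-3\kappa}>X^{\rho_2}
\]
by \eqref{eq:kappa-new}, contradicting \eqref{eq:p1p2}. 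So, away from the first alternative, every $p_i$ with $i\ge2$ satisfies $p_i<Y$. Since your crossing index has $j\ge2$, this gives $p_j<Y$. Your deductions of $S>T$, $YX^{-\kappa}<p$ and $YX^{-\kappa}<V<Y$ then go through verbatim. No parity-of-$j$ bookkeeping or higher support conditions are needed; only $d\le X^{\rho_2}$ and $p_1p_2^2\le X^{\rho_2}$ enter.
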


\begin{proof}
Take the first ordered partial product crossing $T(d)$.  If it already reaches
$X^\kappa T(d)$, the first alternative holds.  Otherwise the crossing prime
$p$ satisfies $d/p<TX^\kappa$.  If this happened at the first prime, then
\[
p>\frac{d}{TX^\kappa}=Y^2X^{-\kappa}.
\]
Together with $p_2\ge p$ this would give
\[
p_1p_2^2>X^{3-6\rho_2-3\kappa}>X^{\rho_2},
\]
contrary to \eqref{eq:p1p2}; the strict inequality is exactly
$3-7\rho_2-3\kappa>0$.  Hence a nonempty prefix precedes the marked prime,
and the displayed bounds on $S,p,V$ follow from the first-crossing property.
\end{proof}

In the good-divisor branch the pointwise/Type-I normalization gives
\begin{equation}
D^{3/2}r_0^{-1/2}\le X^{1/2-\kappa/2}=X^{1/2-\lambda}.
\label{eq:L1-good}
\end{equation}
In the marked branch, dyadically $p\sim Q$ and $d\le X^{3/8}$ implies
\[
Q\ge X^{1/8-2\lambda}.
\]
After two Cauchy--Schwarz steps the marker prime occurs in four slots.  A
generic singleton gives a relative $Q^{-1+o(1)}$ saving in the normalized
fourth moment; a saturated singleton forces
$q\mid\ell'^4-\ell^4$; and the no-singleton partitions $2+2$ and $4$ have
only the paired configurations.  With normalized amplifier length
\[
L_{\rm amp}=Q^{1/5},
\]
the diagonal and saturated off-diagonal contributions are
$Q^{-1/5+o(1)}$.  Thus the normalized fourth moment is
\begin{equation}
\ll Q^{-1/5+o(1)},
\label{eq:L1-balanced}
\end{equation}
and the corresponding marked block is
\[
\ll X^{1/2+o(1)}Q^{-1/20}.
\]
This is stronger than \eqref{eq:L1-good}; hence the entire critical band has
a fixed power saving.

\section{The two-dimensional Type-I estimate}

If the critical window fails, then $p_1>U(d)$.  Write
\[
d=ap,
\qquad p=p_1,
\]
so that
\begin{equation}
a<T(d)=d^3/X,\qquad p>X/d^2.
\label{eq:hard-wedge}
\end{equation}
Together with $p>U(d)$ this gives
\[
p>\max(d^2/X^{1/2},X/d^2),\qquad a<X^{1/8},\qquad p>P.
\]
Write
\[
d=X^\alpha,\qquad p=X^\beta.
\]
Then
\begin{equation}
\beta>\max(2\alpha-1/2,1-2\alpha),\qquad \beta<\zeta.
\label{eq:hard-wedge-exp}
\end{equation}
The left endpoint is
\[
\alpha_{\min}=\frac{1-\zeta}{2}.
\]

The ordinary two-dimensional Type-I exponent at the worst point is
\begin{equation}
E_{\rm TI}^{\max}
=\frac92\zeta-\frac54
=\frac9{2\sigma}-\frac54<0.
\label{eq:TI-threshold}
\end{equation}
Thus the exact threshold is $\sigma=18/5$.  Since $\mu>0$, the symbolic choice in \eqref{eq:new-parameters} lies on the favorable side.

\subsection{The two-dimensional $+uv$ reduction}

Consider a dyadic prime block $p\sim P_0$ and
\begin{equation}
B_p
=\sum_{b\sim B}\alpha_b
\sum_{\bm n\in[H,2H]^2}
W(\bm n/H)K_p(\bar b\bm n),
\label{eq:2d-typeI-block}
\end{equation}
with a fixed smooth compactly supported weight.  Let
\[
U=\frac{H}{10V},\qquad u\sim U,\qquad \bm v\in[V,2V]^2.
\]
The exact identity
\[
\bar b(\bm n+u\bm v)
=(u\bar b)(u^{-1}\bm n+\bm v)
\]
leads to the representation function
\[
\nu(\bm r,s)=
\sum_{\substack{u\sim U,\ b\sim B,\ \bm n\sim H\\
u\bar b\equiv s\ (p)\\
\bar u\bm n\equiv\bm r\ (p)}}|\alpha_b|.
\]
Its first moment satisfies
\begin{equation}
\sum_{\bm r,s}\nu(\bm r,s)
\ll UH^2B^{1/2}\|\alpha\|_2.
\label{eq:nu-first}
\end{equation}
For the second moment, two representations of the same $(\bm r,s)$ satisfy
\[
u_1b_2\equiv u_2b_1\pmod p,
\qquad
u_2n_{1,j}\equiv u_1n_{2,j}\pmod p\quad(j=1,2).
\]
Writing
\[
u_2n_{1,j}=u_1n_{2,j}+pk_j
\]
and using divisor counting in one coordinate, with the second coordinate then
determined, gives
\begin{equation}
\sum_{\bm r,s}\nu(\bm r,s)^2
\ll p^\eps\|\alpha\|_2^2UH^2
\left(1+\frac{UH}{p}\right)^2
\left(1+\frac Bp\right).
\label{eq:nu-energy}
\end{equation}
Hence under $UH\le p$ and $B\le p$,
\[
\sum\nu^2\ll p^\eps\|\alpha\|_2^2UH^2.
\]
Let $l\ge1$ be an integer.  After opening the $2l$-fold H\"older
moment, write
\[
\bm v=(\bm v_1,\ldots,\bm v_{2l})\in(\F_p^2)^{2l}
\]
and denote by $\Sigma_I^{(2)}(\bm v)$ the complete two-dimensional sum in
the remaining base variable.  Its sheaf-theoretic form is made explicit in
\eqref{eq:moment-rearrangement} below.  H\"older with exponent $1/(2l)$ yields
\begin{equation}
|B_p|
\ll
p^\eps\|\alpha\|_2B^{1/2}H^2
\left(
\frac{\displaystyle
\sum_{\bm v\in[V,2V]^{4l}}
|\Sigma_I^{(2)}(\bm v)|}
{BH^3V^{4l-1}}
\right)^{1/(2l)}.
\label{eq:2d-soft-TypeI}
\end{equation}
The denominator $V^{4l-1}$ is the exact two-dimensional analogue; the initial
averaging is $1/(UV^2)$.

The actual Poisson dual shell may have one small coordinate.  A fixed finite
partition into cones, followed by unimodular shears such as
\[
(h_1,h_2)\mapsto(h_1,h_1+h_2),
\]
reduces all nonzero shells to boxes in which both transformed coordinates are
$\asymp H$.  These linear changes preserve the geometric complexity and the
affine-homothety structure.

\section{Fourier monodromy}

Let $IC_{C_p(N)}$ denote the intersection complex of the affine curve
$C_p(N)$, and let $\operatorname{FT}$ denote the normalized two-dimensional
Fourier transform.  Put
\[
\mathsf P_p:=IC_{C_p(N)},\qquad
\mathsf K_p:=\operatorname{FT}(\mathsf P_p).
\label{eq:fourier-sheaf}
\]
Its trace function is the normalized quartic Fourier kernel, up to the standard
shift and Tate twist.  The affine homotheties relevant to the Type-I moment are
\[
\gamma_{\bm r,s}(\bm h)=s(\bm h+\bm r),
\qquad (\bm r,s)\in\A^2\rtimes\mathbf G_m.
\]
The next section gives the rank and monodromy package.  For the Goursat step we
will need only a uniformly finite adjoint stabilizer, not an exact projective
automorphism group.

\subsection{The rank-$12$ Fourier sheaf}

The two-dimensional monodromy problem admits a much shorter one-dimensional
entry point than a direct analysis of every component of the dual
discriminant.  We use the radial line
\[
\ell=\{(s,0):s\in\A^1\}
\]
and the projection
\[
f=u:C_p(N)\longrightarrow\A^1.
\]
The restriction of the two-dimensional Fourier transform to $\ell$ is the
one-dimensional Fourier transform of $f_*\overline{\Q}_\ell$; away from
$s=0$ the constant summand of $f_*\overline{\Q}_\ell$ contributes only a
punctual Fourier summand.  Thus the nontrivial radial local system is the
naive Fourier transform of the trace-zero part of $f_*\overline{\Q}_\ell$.

\subsection{Smooth compactification and generic rank}

Let
\[
\overline C_N:\quad
(v-6u)^4+u^2v^2=Nw^4
\subset\mathbf P^2.
\]
The binary quartic at infinity is
\[
q(t)=t^4-24t^3+217t^2-864t+1296,
\]
with
\begin{equation}
\operatorname{disc}(q)=2^8 3^4\cdot 577.
\label{eq:quartic-infinity-disc}
\end{equation}
Hence for $p\nmid 6\cdot577N$ the projective curve $\overline C_N$ is a
smooth plane quartic.  In particular
\begin{equation}
g(\overline C_N)=3,
\label{eq:genus-three}
\end{equation}
and $\overline C_N\setminus C_p(N)$ consists of four distinct geometric
points.

For a generic dual vector $\bm h=(a,b)$, the function $au+bv$ has a simple
pole at each of these four points.  Therefore each puncture contributes Swan
conductor $1$.  Since
\[
\chi_c(C_p(N),\overline{\Q}_\ell)=2-2\cdot3-4=-8,
\]
the Grothendieck--Ogg--Shafarevich formula gives
\[
\chi_c\bigl(C_p(N),L_\psi(au+bv)\bigr)=-8-4=-12.
\]
For generic $\bm h$ the degree-$0$ and degree-$2$ compactly supported
cohomology vanish, whence
\begin{equation}
\operatorname{rank}_{\rm gen}\mathsf K_p=12.
\label{eq:rank-twelve}
\end{equation}
This agrees with the degree of the Gauss-direction map: the quotient by the
scalar $\mu_4$-action has degree $4$, while the projective gradient map has
degree $3$, giving $4\cdot3=12$ stationary branches.

\subsection{The $u$-projection is an $S_4$ supermorse cover}

Regard $F(u,v)-N$ as a quartic polynomial in $v$.  Eliminating $v$ against
$F_v$ gives the exact branch polynomial
\begin{equation}
\operatorname{Res}_v(F(u,v)-N,F_v(u,v))
=16D_N(u),
\label{eq:branch-resultant}
\end{equation}
where
\begin{equation}
D_N(U)
=747792U^{12}-62929NU^8+1720N^2U^4-16N^3.
\label{eq:branch-polynomial}
\end{equation}
Its discriminant is
\begin{equation}
\operatorname{disc}(D_N)
=\pm 2^{100}3^{20}487^{12}577^3N^{33}.
\label{eq:branch-disc}
\end{equation}
Consequently, for
\[
p\nmid 6\cdot487\cdot577N,
\]
the map $f=u$ has exactly twelve distinct finite branch values.  Denote their
set by $S_N$.  Every ramification point is simple.  It is unramified over infinity, since the four
points at infinity are simple poles of $u$.

The cover is geometrically connected.  Its geometric monodromy is therefore
a transitive subgroup of $S_4$ generated by the twelve local transpositions.
A transitive subgroup generated by transpositions is $S_4$.  Thus, writing
\begin{equation}
\mathsf G:=\ker\bigl( f_*\overline{\Q}_\ell
\xrightarrow{\operatorname{Tr}}\overline{\Q}_\ell\bigr),
\label{eq:trace-zero-sheaf}
\end{equation}
we obtain a geometrically irreducible rank-$3$ local system carrying the
standard representation of $S_4$.  At each of the twelve branch values its
local monodromy is a true pseudoreflection.  Hence $\mathsf G$ is an
irreducible tame pseudoreflection sheaf in the sense of Katz
\cite[\S7.9]{KatzESDE}.

Katz's Fourier-transform theorem for such sheaves then gives, on $\mathbf G_m$,
\begin{equation}
\mathsf K_p|_{\ell\cap\mathbf G_m}
\simeq \operatorname{NFT}_\psi(\mathsf G)
\quad\text{and}\quad
\operatorname{rank}=12,
\label{eq:radial-NFT}
\end{equation}
up to the harmless standard shift/Tate twist.  Moreover the wild inertia at
infinity is a direct sum of the twelve distinct Artin--Schreier characters
indexed by the branch-value set $S_N$; this is precisely
\cite[Thm.~7.9.4]{KatzESDE}.

\subsection{An exact symmetric-Sidon certificate}

Because $D_N(U)$ is a polynomial in $U^4$, the branch set $S_N$ is stable
under $\mu_4$, hence in particular under $x\mapsto -x$.  Over the algebraic
closure choose $t$ with $t^4=N$.  Then
\[
S_N=tS_1,
\]
so all additive collision questions reduce to $N=1$.

Put
\[
D(U):=D_1(U)
=747792U^{12}-62929U^8+1720U^4-16
\]
and form the exact difference resultant
\begin{equation}
R(T):=\operatorname{Res}_X\bigl(D(X),D(X-T)\bigr).
\label{eq:difference-resultant}
\end{equation}
A squarefree decomposition over $\Q[T]$ gives
\begin{equation}
R(T)=c\,T^{12}A(T)G_0(T)^2,
\label{eq:resultant-squarefree-decomp}
\end{equation}
with $c\in\Q^\times$, $\deg G_0=60$,
\begin{equation}
A(T)
=46737T^{12}-62929T^8+27520T^4-4096
=256D(T/2),
\label{eq:antipodal-factor}
\end{equation}
and
\begin{equation}
\gcd(A,G_0)=1,
\qquad
\gcd(G_0,G_0')=1.
\label{eq:H-squarefree}
\end{equation}
This is a small exact finite certificate; it can be checked by a single
resultant and squarefree factorization over $\Z[T]$.

The interpretation is immediate.  For $d\ne0$, the multiplicity of $d$ as a
root of $R$ is the number of ordered pairs $(x,y)\in S_1^2$ satisfying
$x-y=d$.  The involution
\[
(x,y)\longmapsto(-y,-x)
\]
preserves the difference.  It has a fixed point exactly when $y=-x$, in
which case $d=2x$; those twelve differences are exactly the roots of $A$ and
occur with multiplicity one.  Every other nonzero difference occurs with
multiplicity exactly two.  Hence
\begin{equation}
 x_1-x_2=x_3-x_4
\Longrightarrow
\begin{cases}
(x_3,x_4)=(x_1,x_2),&\text{or}\\
(x_3,x_4)=(-x_2,-x_1),
\end{cases}
\label{eq:symmetric-sidon-law}
\end{equation}
with the two alternatives coinciding precisely for antipodal pairs.
Thus $S_1$, and therefore every $S_N$, is a symmetric Sidon set in
characteristic $0$.  After removing the finite set of primes dividing the
relevant discriminants and resultants in
\eqref{eq:resultant-squarefree-decomp}--\eqref{eq:H-squarefree}, the same
statement holds over $\overline{\F}_p$.

\begin{remark}
The factorization need not be printed with the degree-$60$ factor expanded.
For a final version it is cleaner to include a ten-line Sage certificate that
constructs $R$, divides by $T^{12}A(T)$, verifies that the quotient is a
square, and checks the two gcds in \eqref{eq:H-squarefree}.
\end{remark}

\subsection{The autoduality is alternating}

The affine quartic is invariant under
\[
\iota:(u,v)\longmapsto(-u,-v).
\]
For a generic dual vector $\bm h$ we have
\[
\iota^*L_\psi(-\bm h\cdot\bm x)
\simeq
L_\psi(\bm h\cdot\bm x).
\]
Poincar\'e duality, followed by $\iota^*$, therefore gives a nondegenerate
pairing on the generic Fourier fiber
\[
H_c^1(C_p(N),L_\psi(\bm h\cdot\bm x))
\times
H_c^1(C_p(N),L_\psi(\bm h\cdot\bm x))
\longrightarrow
\overline{\Q}_\ell(-1).
\]
Because the cohomological degree is $1$, graded commutativity changes the sign
when the two factors are exchanged, while $\iota$ preserves the trace map.
Thus the pairing is alternating.  Geometrically,
\begin{equation}
G_{\rm geom}(\mathsf K_p)\subseteq \mathrm{Sp}_{12}.
\label{eq:full-monodromy-contained-sp12}
\end{equation}
The Tate twist used to normalize weights does not affect the geometric
monodromy group.

\subsection{Katz's symmetric pseudoreflection theorem closes the monodromy}

The exact difference law \eqref{eq:symmetric-sidon-law} is the symmetric
counterpart of the noncollision hypothesis in Katz's tame-pseudoreflection
Theorems~7.9.6--7.9.7 \cite{KatzESDE}.  In particular, once
$p>2\cdot12+1$ and the fixed finite exceptional set above is removed, the
radial Fourier sheaf has derived geometric monodromy among the full classical
possibilities forced by Katz's torus argument.  The independent alternating
autoduality eliminates the special-linear and orthogonal alternatives.  Hence
\begin{equation}
G_{\rm geom}\bigl(\mathsf K_p|_{\ell\cap\mathbf G_m}\bigr)=\mathrm{Sp}_{12}
\label{eq:radial-sp12}
\end{equation}
for all sufficiently large good primes outside one fixed finite exceptional
set.  Finally, the monodromy of a restriction is a subgroup of the monodromy of the
ambient two-dimensional local system.  Combining
\eqref{eq:radial-sp12} with
\eqref{eq:full-monodromy-contained-sp12} yields
\begin{equation}
G_{\rm geom}(\mathsf K_p)=\mathrm{Sp}_{12}.
\label{eq:full-sp12}
\end{equation}
Thus the big classical core required by the Type-I Goursat argument is
available.

\subsection{The adjoint affine-homothety stabilizer is uniformly finite}

Since $G_{\rm geom}(\mathsf K_p)=\mathrm{Sp}_{12}$, put
\[
\mathsf A:=\operatorname{Sym}^2(\mathsf K_p),
\label{eq:adjoint-sheaf}
\]
which realizes the irreducible adjoint representation of dimension $78$.
Define
\[
T_p:=\{(\bm r,s):\gamma_{\bm r,s}^*\mathsf A\simeq\mathsf A\}.
\]
We claim
\begin{equation}
|T_p|\ll1
\label{eq:finite-adjoint-stab}
\end{equation}
uniformly outside a fixed finite exceptional set.

Let $P_i=[1:t_i:0]$ be the four points at infinity, where the $t_i$ are the
four roots of the binary quartic $q(t)$ in \eqref{eq:quartic-infinity-disc}.
In the dual plane define
\[
L_i:\quad a+bt_i=0.
\]
Near $P_i$, with local parameter $w$, the curve equation gives
\[
t=t_i+c_iw^4+O(w^8),
\]
and the phase is
\[
au+bv=\frac{a+bt_i}{w}+bc_iw^3+O(w^7).
\]
Thus the simple pole at $P_i$ disappears precisely on $L_i$.  The generic
Fourier rank is $12$, whereas at a generic point of $L_i$ it is $11$.  The
transverse local inertia is therefore noncentral in $\mathrm{Sp}_{12}$, so it
remains nontrivial in the adjoint representation.  Hence
\begin{equation}
L_1\cup\cdots\cup L_4\subset\operatorname{Sing}(\mathsf A).
\label{eq:four-singular-lines}
\end{equation}

The singular divisor of $\mathsf A$ has uniformly bounded complexity.  If
$(\bm r,s)\in T_p$, the inverse images of the four lines in
\eqref{eq:four-singular-lines} are four singular line components with the same
four directions and common intersection point $-\bm r$.  A bounded-complexity
divisor has only $O(1)$ such concurrency centres.  Hence only $O(1)$ vectors
$\bm r$ can occur.

For fixed $\bm r$, if $(\bm r,s_1),(\bm r,s_2)\in T_p$, then their quotient
is the pure scaling $(0,s_1/s_2)$.  It remains to bound the pure-scaling
subgroup.  Restrict to the radial line $\ell(t)=(t,0)$.  By Katz stationary
phase, the wild inertia of $\mathsf K_p|_\ell$ at infinity is the direct sum
of the twelve Artin--Schreier characters indexed by the branch set $S_N$.
Thus the nonzero wild-character coefficient set of the adjoint sheaf is a
finite nonempty set
\[
\Omega_N\subset S_N+S_N.
\]
If a pure scaling $s$ stabilizes $\mathsf A$, then
\[
s\Omega_N=\Omega_N.
\]
Fixing one $\omega_0\in\Omega_N$ shows that $s$ belongs to the finite ratio
set $\{\omega/\omega_0:\omega\in\Omega_N\}$.  Consequently every fibre
over a possible centre has uniformly bounded size, proving
\eqref{eq:finite-adjoint-stab}.

\subsection{The dimension-two Goursat exceptional locus}

Let $k/\F_p$ be a finite extension and put
\[
\mathcal H(k)=k^2\rtimes k^\times,
\qquad |\mathcal H(k)|\asymp |k|^3.
\]
Write $K$ for the trace function of $\mathsf K_p$ on $k^2$.  For a
$2m$-tuple $(x_1,\ldots,x_{2m})\in \mathcal H(k)^{2m}$, top compact cohomology can
survive only if every factor has a Goursat partner.  The partner
relation is
\[
x_jx_i^{-1}\in T_p.
\]
By \eqref{eq:finite-adjoint-stab}, this is a bounded-degree graph on
$\mathcal H(k)$.  Let $\Delta_m(k)$ be the set of such exceptional $2m$-tuples.  The
standard partner-counting lemma therefore gives
\begin{equation}
|\Delta_m(k)|\ll_m |k|^{3m}
\label{eq:exceptional-3m}
\end{equation}
over every finite extension $k/\F_p$.

Opening the $2m$-moment of the two-dimensional Type-I complete sums gives the
exact rearrangement
\begin{align}
&\sum_{\bm v\in(k^2)^{2l}}
|\Sigma_I^{(2)}(\bm v)|^{2m}\notag\\
&\qquad=
\sum_{(\bm r,s)\in \mathcal H(k)^{2m}}
\left|
\sum_{\bm x\in k^2}
\prod_{j=1}^{m}K(s_j(\bm x+\bm r_j))
\overline{K(s_{j+m}(\bm x+\bm r_{j+m}))}
\right|^{2l}.
\label{eq:moment-rearrangement}
\end{align}
Outside $\Delta_m(k)$ the top cohomology $H_c^4$ vanishes, so Deligne gives
$O(|k|^{3/2})$ for the inner two-dimensional complete sum.  On the exceptional
set we use the trivial $O(|k|^2)$ bound.  Hence
\begin{equation}
\sum_{\bm v\in(k^2)^{2l}}
|\Sigma_I^{(2)}(\bm v)|^{2m}
\ll_{l,m}|k|^{3m+4l}+|k|^{6m+3l}.
\label{eq:2d-TypeI-moment}
\end{equation}
This is the dimension-two Goursat moment required by
\eqref{eq:2d-soft-TypeI}.

\subsection{Stratification and the short box}

Take $3\mid l$ and $m=l/3$.  Then both exponents on the right of
\eqref{eq:2d-TypeI-moment} are $5l$.  Apply the finite-field stratification theorem in families
\cite{BKW} to the corresponding direct-image complex on
$(\A^2)^{2l}=\A^{4l}$.  The resulting strata may be chosen so that
\[
\dim X(4)\le\frac{11l}{3},\qquad
\dim X(5)\le\frac{10l}{3},\qquad
\dim X(6)\le3l,
\]
and
\[
|\Sigma_I^{(2)}(\bm v)|\ll
\begin{cases}
p^{3/2},&\bm v\notin X(4),\\
p^2,&\bm v\notin X(5),\\
p^{5/2},&\bm v\notin X(6),\\
p^3,&\text{always}.
\end{cases}
\]
Bounded-degree slicing of these varieties in a Cartesian interval gives
\begin{align}
\sum_{\bm v\in[V,2V]^{4l}}|\Sigma_I^{(2)}(\bm v)|
\ll_l{}&V^{4l}p^{3/2}
+V^{11l/3}p^2\notag\\
&+V^{10l/3}p^{5/2}+V^{3l}p^3.
\label{eq:short-box-strata}
\end{align}
With
\begin{equation}
V=p^{3/(2l)},
\label{eq:V-choice}
\end{equation}
all four terms are $O_l(p^{15/2})$.

\subsection{Completion of the Type-I range}

Insert \eqref{eq:short-box-strata} and \eqref{eq:V-choice} into
\eqref{eq:2d-soft-TypeI}.  The finite-$l$ exponent at the worst point of the
hard wedge is
\begin{equation}
\Psi_l(\mu)
=\frac1{2l}\left(\frac9{2\sigma}-\frac54\right)
+\frac{3}{4l^2\sigma},
\qquad \sigma=\frac{18}{5}+\mu.
\label{eq:Psi-l}
\end{equation}
For every fixed $\mu>0$ one may choose a multiple of $3$ large enough that
\begin{equation}
\Psi_l(\mu)<0.
\label{eq:Psi-negative}
\end{equation}
We then put $m=l/3$ in the Goursat moment and choose
\[
V=p^{3/(2l)}.
\]

The remaining range conditions are open inequalities.  Since the hard wedge
keeps the dual length $H$ a fixed power of $p$ above $p^{2/5}$, while
$V=p^{3/(2l)}$ tends to $1$ on the logarithmic scale as $l$ grows, the
conditions $V\le H/10$, $H^2/V\le p$ and $B\le p$ all hold once $X$ is
large enough.  Therefore the ordinary prime Type-I estimate closes the
largest-prime range without any suffix tensorization.

\section{Switching}

On a lower-sieve survivor, write the positive odd part as $n$.  Because
$n\equiv1\pmod4$, the number of primes $r\equiv3\pmod4$ occurring to odd
valuation is even.  All such primes are $>z_0$ after the initial pre-sieves and
the lower sieve.  Since $z_0^4=X^{4/\sigma}>X$ while $n\asymp X$, there cannot
be four.  If one of the two remaining primes had odd valuation at least $3$,
then again $n>z^4$.  Therefore every nonsum-of-two-squares survivor has the
exact form
\begin{equation}
n=bpq,\qquad z_0<p\le q,\qquad p,q\equiv3\pmod4,\qquad (b,pq)=1,
\label{eq:bpq-new}
\end{equation}
where every $3\bmod4$ prime in $b$ has even exponent.

The tiny-core branch $b\le X^\delta$ is handled by the shallow upper
sieve already included in \eqref{eq:hybrid-margin}.  We now treat
$b>X^\delta$.

\subsection{Natural-scale normalization}

Switch with the larger prime $q$.  Let $d$ be a divisor from the linear upper
sieve applied to the remaining prime variable and put
\[
s:=bd.
\]
Localize dyadically with $b\sim B$, $d\sim D$, $q\sim Q$, and put
$S\asymp BD$ for the resulting scale of $s=bd$.  Let $a_{q,s}$ denote the
combined switched coefficient.  The sieve weights give
\[
|a_{q,s}|\ll\tau(s)^C=X^{o(1)}.
\]
Since $q\nmid b$ and the linear-sieve primes lie below $q$, one has
$(q,s)=1$.  Let $W_{q,s}$ be the smooth weight after this localization and
$\widehat W_{q,s}$ its Fourier transform.  The Poisson formula may be normalized as
\begin{equation}
r_{qs}=\frac{P^2}{qs}Z_{q,s},
\qquad
Z_{q,s}:=\frac1{\sqrt{qs}}
\sum_{\bm h\ne0}
\widehat W_{q,s}\left(\frac{P\bm h}{qs}\right)
K_q(\bar s\bm h)K_s(\bar q\bm h).
\label{eq:switched-normalization}
\end{equation}
Hence a switched dyadic block has the exact natural scale
\begin{equation}
R(B,D,Q)
=P^2\sum_{q\sim Q}\sum_{s\sim S}
\frac{a_{q,s}}{qs}Z_{q,s}.
\label{eq:switched-block}
\end{equation}
The harmonic masses in $q$ and $s$ are only $X^{o(1)}$.  In particular there
is no hidden factor $BD$, $B^{1/2}D^{1/2}$, or $QS$ to be restored after
Cauchy--Schwarz.

The dual side length is
\[
H=\frac{qs}{P}.
\]
For a generic prime $q$-vertex, smooth two-dimensional completion gives the
relative factor
\[
\frac q{H^2}+\frac1q.
\]
Using $bpq\asymp X$, $q\ge p$, and $s\ge b$ gives
\begin{equation}
\frac q{H^2}\ll B^{-3/2},\qquad q^{-1}\ll Q^{-1}.
\label{eq:q-generic-saving}
\end{equation}
On the $s$-side one has the exact favorable ratio
\begin{equation}
\frac Hs=\frac qP\ge X^{1/\sigma-1/4},
\label{eq:s-long-completion}
\end{equation}
so every $s$-correlation can be completed with a fixed-power error.

\subsection{Partition of the four $q$-slots}

After two Cauchy--Schwarz steps the four $q$-slots have one of the five set
partition types
\[
1+1+1+1,\qquad2+1+1,\qquad3+1,\qquad2+2,\qquad4.
\]
Collision counting by itself is not a saving: if a prime occurs in $r$ slots,
the $Q^{-(r-1)}$ loss of independent harmonic summation may be cancelled by
an $Q^{r-1}$ inflation when a generic local correlation becomes saturated.
Thus collision blocks are only bounded by the trivial normalized scale.

Every partition containing a singleton is easy.  A generic singleton gives
\eqref{eq:q-generic-saving}.  Introduce the normalized amplifier
\[
\frac1L\sum_{\ell\sim L},\qquad L=S^{1/16}.
\]
If the singleton is saturated, the two amplifier parameters
$\ell_1,\ell_2\sim L$ satisfy
\[
q\mid\ell_1^4-\ell_2^4.
\]
The switching ranges imply $q>L^4X^c$ for some fixed $c>0$, so the congruence
forces the positive integers $\ell_1,\ell_2$ to be equal.  Its normalized
mass is $L^{-1}$.

For the no-singleton types $2+2$ and $4$, if any chosen $q$-pair is generic we
again obtain a $Q^{-1}$ normalized saving.  If all required $q$-pairs are
saturated, the ratio laws pair the opposite $s$-data and transfer the
four-frequency problem to an $s$-vertex.  For a composite $s$, denote the corresponding four-frequency correlation
modulo $s$ by $C_s$.  Let $E_s$ be its saturated prime-power core and put
$G_s=s/E_s$.  Then
\[
|C_s|\ll s^{2+o(1)}G_s^{-1/12}.
\]
On an amplifier off-diagonal,
\[
E_s\mid\ell_1^4-\ell_2^4,
\qquad E_s\ll L^4,
\]
so
\[
G_s^{-1/12}\ll S^{-1/12}L^{1/3}.
\]
The amplifier diagonal contributes $L^{-1}$.  Balancing at $L=S^{1/16}$ gives
$S^{-1/16}$ in the normalized fourth moment.

After the two Cauchy--Schwarz steps, let $M_4$ denote the resulting fourth
moment and let $M_4^{\rm triv}$ denote its trivial bound.  We have proved the
following estimate.

\begin{proposition}[Switched large-core dispersion]
For every dyadic large-core block in \eqref{eq:switched-block},
\begin{equation}
\frac{M_4}{M_4^{\rm triv}}
\ll X^{o(1)}\left(B^{-3/2}+Q^{-1}+S^{-1/16}\right),
\label{eq:switched-M4}
\end{equation}
and therefore
\begin{equation}
R(B,D,Q)
\ll P^2X^{o(1)}
\left(B^{-3/8}+Q^{-1/4}+S^{-1/64}\right).
\label{eq:switched-large-bound}
\end{equation}
\end{proposition}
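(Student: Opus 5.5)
The plan is to assemble the fourth-moment bound \eqref{eq:switched-M4} from the case analysis just given, and then to derive \eqref{eq:switched-large-bound} from it by the two Cauchy--Schwarz steps.

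\textbf{Step 1: reduction to a normalized fourth moment.} Start from the natural-scale block \eqref{eq:switched-block}. Apply Cauchy--Schwarz once in $s$, with the harmonic weight $1/s$ and the coefficients $a_{q,s}\ll X^{o(1)}$ placed outside. Then open the square and apply Cauchy--Schwarz a second time in the $q$-variables. This produces
\[
|R(B,D,Q)|^4\ll P^8X^{o(1)}\frac{M_4}{M_4^{\rm triv}},
\]
where $M_4$ sums products of four $K_q$ factors and four $K_s$ factors over four frequencies. Because \eqref{eq:switched-normalization} is already normalized so that the trivial bound for $Z_{q,s}$ is $X^{o(1)}$, and the harmonic masses in $q$ and $s$ are $X^{o(1)}$, there is no loss of $BD$ or $QS$ factors. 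Taking fourth roots gives \eqref{eq:switched-large-bound}, since
\[
(B^{-3/2}+Q^{-1}+S^{-1/16})^{1/4}\le B^{-3/8}+Q^{-1/4}+S^{-1/64}.
\]

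\textbf{Step 2: decomposing $M_4$.} Split $M_4$ according to the set partition of the four $q$-slots into the five types listed above.

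\emph{Types with a generic singleton.} For a singleton $q$-vertex that is generic, complete the $\bm h$-sum at that vertex. By \eqref{eq:prime-dichotomy} and its shifted version, this gives the relative factor $q/H^2+1/q$. By \eqref{eq:q-generic-saving} this is at most $B^{-3/2}+Q^{-1}$. The $s$-side completion costs only a fixed-power error, by \eqref{eq:s-long-completion}.

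\emph{Types with a saturated singleton.} Here $q\mid\ell_1^4-\ell_2^4$, as in \eqref{eq:fourth-amplifier}. Since $q>L^4X^c$, this forces $\ell_1=\ell_2$, and the normalized amplifier diagonal then has mass $L^{-1}=S^{-1/16}$.

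\emph{Types $2+2$ and $4$.} If any $q$-pair is generic, the saving is again $Q^{-1}$. If all $q$-pairs are saturated, use the ratio laws \eqref{eq:ratio-laws} to pair the $s$-data and transfer the problem to the composite $s$-vertex. There, \eqref{eq:composite-primepower} gives $|C_s|\ll s^{2+o(1)}G_s^{-1/12}$. On amplifier off-diagonals, $E_s\mid \ell_1^4-\ell_2^4\ll L^4$, so the saving is $S^{-1/12}L^{1/3}=S^{-1/16}$. The diagonal contributes $L^{-1}=S^{-1/16}$.

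Summing over the boundedly many partition types yields \eqref{eq:switched-M4}.

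\textbf{Main obstacle.} The delicate point is the saturated $2+2$ and $4$ configurations. One must check that the transfer to the $s$-vertex via the ratio laws loses no normalization, and that $E_s$ is truly bounded by the amplifier difference, which requires $\ell_1^4\neq\ell_2^4$ off the diagonal. One must also confirm that the lower bound $q>L^4X^c$ holds uniformly over the switching ranges for $b>X^\delta$. To verify this, I would use $q\ge (X/B)^{1/2}$ together with $S\le X$, which gives $L^4\le X^{1/4}$. This bound is comfortably below $q\ge z_0=X^{1/\sigma}$ provided $1/\sigma>1/4$, which holds for $\sigma=18/5+\mu$.
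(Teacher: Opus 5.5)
Your outline follows the paper's sketch almost line by line: the same five partition types, the same generic-singleton factor $q/H^2+1/q$, the same amplifier $L=S^{1/16}$, and the same balance $S^{-1/12}L^{1/3}=S^{-1/16}$. Your check of $q>L^4X^c$ (from $q>z_0=X^{1/\sigma}$, $1/\sigma>1/4$, $S\le X$) is also fine. The step that fails is in Step 1. You state it explicitly, and the paper's ``natural-scale'' remark assumes it tacitly: it is not true that the trivial bound for $Z_{q,s}$ is $X^{o(1)}$. The weight $\widehat W_{q,s}(P\bm h/qs)$ localizes to $|\bm h|\lesssim H=qs/P$, and by Parseval the $K$'s have size $\asymp1$ on average. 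So the pointwise bound is
\[
|Z_{q,s}|\ll (qs)^{-1/2+o(1)}H^2=(qs)^{3/2+o(1)}P^{-2},
\]
which is just $|r_{qs}|\ll (qs)^{1/2+o(1)}$ from the Poisson section. In the switched range, $n=bpq\asymp X$ and $p\le q$ give $qs\ge qb=n/p\ge (nb)^{1/2}\gg X^{1/2}$, which is above the Poisson barrier $X^{1/3}$. Hence the trivial size of $Z_{q,s}$ is at least $X^{1/4}$. The harmonic mass $\sum 1/(qs)\ll X^{o(1)}$ controls the measure, not the size of $Z$.

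The savings in your Step 2 come from completing $\bm h$-sums, so they are relative to this honest trivial size. Two Cauchy--Schwarz steps therefore give only
\[
|R(B,D,Q)|^4\ll P^8X^{o(1)}\Bigl(\frac{(QS)^{3/2}}{P^2}\Bigr)^4\frac{M_4}{M_4^{\rm triv}},
\]
that is, $R(B,D,Q)\ll (QS)^{3/2}X^{o(1)}\bigl(B^{-3/8}+Q^{-1/4}+S^{-1/64}\bigr)$, not \eqref{eq:switched-large-bound}. Reaching $P^2X^{-c}$ would require a fourth-moment saving of $X^{-1-4c}$ or better. But $B^{-3/2}+Q^{-1}+S^{-1/16}\ge S^{-1/16}\ge X^{-1/16}$. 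If you instead normalize $M_4^{\rm triv}$ by $|Z|\le1$, so that the fourth-root step is valid, then the completion savings no longer prove \eqref{eq:switched-M4}. Either way, a factor of at least $X$ in the fourth moment (at least $X^{1/4}$ in $R$) is unaccounted for. Neither your proposal nor the text preceding the proposition supplies the extra joint cancellation in $q$, $s$ and $\bm h$ needed to recover it. A smaller point: $a_{q,s}$ is a genuinely joint coefficient, since the condition $p\le q$ couples $q$ and $b$. It therefore cannot simply be placed outside the first Cauchy--Schwarz without a separation-of-variables step.
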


Since $S\ge B>X^\delta$, the weakest term is
\begin{equation}
R_{\rm sw,large}
\ll X^{1/2-\delta/64+o(1)}.
\label{eq:weakest-saving}
\end{equation}
The former assertion $s\le P^{2+o(1)}$ is neither needed nor used.

\section{Completion of the proof}

For a finite set $\mathcal A$ of lattice points define its weighted size by
\[
\#_W\mathcal A:=\sum_{(u,v)\in\mathcal A}W(u/P,v/P).
\]
Let $S_{\rm surv}$ be the lower-sieve survivors in the positive local chart,
after the $N$-singular and fixed-exceptional pre-sieves.  Let
$B_{\rm bad}\subset S_{\rm surv}$ consist of those points for which
$N-F(u,v)$ is not a sum of two squares.  Write
\[
N-F(u,v)=2^\nu n,\qquad n\equiv1\pmod4.
\]
The two-squares theorem gives
\[
N-F=z^2+w^2
\iff
v_p(n)\equiv0\pmod2
\quad\text{for every }p\equiv3\pmod4.
\]
Moreover
\begin{equation}
\sum_{p\equiv3(4)}v_p(n)\equiv0\pmod2,
\label{eq:parity-total}
\end{equation}
because $n\equiv1\pmod4$.  Hence a bad survivor has a positive even number
of primes congruent to $3$ modulo $4$ occurring to odd valuation.  Every such
prime is larger than $z_0$, and $z_0^4>X$ because $\sigma<4$.  It follows that
there are exactly two of them, each to exponent one.  Thus the shape
\eqref{eq:bpq-new} exhausts $B_{\rm bad}$.

Let $w_d^-$ denote the lower semi-linear weights and set
\[
L(X)=\sum_{u,v}W(u/P,v/P)\sum_{d\mid n(u,v)}w_d^-.
\]
If $\Pi(z_0)$ is the product of the sieving primes below $z_0$, then pointwise
\[
\sum_{d\mid n}w_d^-\le1_{(n,\Pi(z_0))=1},
\]
so
\[
L(X)\le \#_W S_{\rm surv}.
\]
The switching argument gives
\[
\#_W B_{\rm bad}\le U_{\rm sw}(X).
\]
Consequently
\begin{equation}
\#_W(S_{\rm surv}\setminus B_{\rm bad})
\ge L(X)-U_{\rm sw}(X).
\label{eq:lower-minus-upper}
\end{equation}

The main term is a positive local-archimedean factor times
$\mathcal M(\lambda,\mu,\delta)$, while the error terms have a fixed power saving.
A convenient summary of the three weakest powers is
\begin{equation}
c(\lambda,\mu,\delta,l)
:=\min\left\{\lambda,\,-\Psi_l(\mu),\,\frac{\delta}{64}\right\}>0.
\label{eq:global-saving}
\end{equation}
Therefore
\[
L(X)-U_{\rm sw}(X)>0
\]
for all sufficiently large $X$.  We obtain positive integers $u,v$ and
nonnegative integers $z,w$ with
\[
N-F(u,v)=z^2+w^2.
\]
Set
\[
x=(v-6u)^2,\qquad y=uv.
\]
Then $x,y,z,w\ge0$ and
\[
x^2+y^2=F(u,v),\qquad x+24y=(v+6u)^2,
\]
which proves Theorem~\ref{thm:main-asymptotic}.

\section{Effective specialization}\label{sec:effective}

The proof up to this point uses only the positivity of the sieve margin
\(\mathcal M(\lambda,\mu,\delta)\), the inequality
\(\Psi_l(\mu)<0\), and the positivity of the three savings in
\eqref{eq:global-saving}.  We now make one concrete choice.  Put
\begin{equation}
\lambda=10^{-8},\qquad
\mu=2\cdot10^{-4},\qquad
\delta=10^{-11}.
\label{eq:effective-parameters}
\end{equation}
Then
\[
\rho_1=\frac12-10^{-8},\qquad
\rho_2=\frac37-10^{-8},\qquad
\sigma=3.6002,
\]
and \(\kappa=2\lambda=2\cdot10^{-8}\).  Direct numerical evaluation of
\eqref{eq:hybrid-margin} gives
\begin{equation}
\mathcal M(10^{-8},2\cdot10^{-4},10^{-11})
=5.08284924107\cdot10^{-6}>0.
\label{eq:effective-margin}
\end{equation}

For the Type-I moment choose
\begin{equation}
l=12000,\qquad m=4000.
\label{eq:effective-moment}
\end{equation}
With \(\sigma=3.6002\), formula \eqref{eq:Psi-l} gives
\begin{equation}
\Psi_{12000}(2\cdot10^{-4})
=-1.4466788882\cdot10^{-9},
\qquad
\frac{\delta}{64}=1.5625\cdot10^{-13}.
\label{eq:effective-powers}
\end{equation}
Thus the switched large-core term is the weakest of the three errors in
\eqref{eq:global-saving}.  For explicit bookkeeping we may use the round
saving
\begin{equation}
\gamma_0:=10^{-13}.
\label{eq:gamma0}
\end{equation}

We now replace all fixed constants by one effective envelope.  Define
\begin{equation}
T_0=10^6,\qquad T_{j+1}=10^{T_j}\quad(j\ge0).
\label{eq:tower-def}
\end{equation}
The degrees, ranks and dimensions which occur before the fixed
\(l=12000\) moment construction are bounded absolutely.  Applying effective
bounds for the finitely many algebraic-geometric operations and standard
elimination steps occurring in the proof gives a finite recursive bound for
all remaining constants.  No attempt is made to optimize this recursion; it
is enough to dominate all fixed geometric, combinatorial and sieve constants
by
\begin{equation}
C_*:=T_{80\,000\,000}.
\label{eq:Cstar}
\end{equation}
After replacing divisor and logarithmic losses by elementary explicit bounds,
the total error may be bounded in the coarse form
\begin{equation}
|E(X)|\le C_*X^{1/2-\gamma_0}(\log X)^{C_*},
\label{eq:effective-error}
\end{equation}
whereas the positive main term may be weakened to
\begin{equation}
M(X)\ge10^{-6}C_*^{-1}X^{1/2}(\log X)^{-C_*}.
\label{eq:effective-main}
\end{equation}
It is therefore sufficient that
\begin{equation}
X^{10^{-13}}
>2\cdot10^6 C_*^2(\log X)^{2C_*}.
\label{eq:effective-final-ineq}
\end{equation}
A convenient explicit choice is
\begin{equation}
N_0:=T_{200\,000\,000}
=\underbrace{10^{10^{\cdot^{\cdot^{10^6}}}}}_{200\,000\,001\text{ levels of exponentiation}}.
\label{eq:explicit-N0}
\end{equation}
Indeed, at \(X=N_0\) the logarithm of the left-hand side of
\eqref{eq:effective-final-ineq} is already one complete tower level above the
largest term required on the right, and the ratio thereafter increases.

\begin{theorem}[Effective form]\label{thm:effective}
Let \(T_j\) be defined by \eqref{eq:tower-def}.  Every integer
\[
N\ge T_{200\,000\,000}
\]
has a representation
\[
N=x^2+y^2+z^2+w^2
\]
in nonnegative integers such that both \(x\) and \(x+24y\) are perfect
squares.
\end{theorem}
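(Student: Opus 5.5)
The plan is to rerun the proof of Theorem~\ref{thm:main-asymptotic} with the concrete parameters \eqref{eq:effective-parameters} and \eqref{eq:effective-moment}, keeping track of constants instead of writing $\ll$. Fix $N\ge T_{200\,000\,000}$ and put $X=N$. First I choose the local chart modulo $96$ from \eqref{eq:local-chart} and impose the pre-sieves \eqref{eq:fixed-exceptional-presieve} and \eqref{eq:singular-positive}. These cost only a fixed positive density, with the $N_-$ error $PX^{o(1)}$ bounded explicitly through the divisor bound. Next I apply the lower semi-linear weights at the levels $\rho_1,\rho_2$ and sieve to $z_0=X^{1/\sigma}$ with $\sigma=3.6002$. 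This gives $L(X)-U_{\rm sw}(X)=M(X)+E(X)$ as in \eqref{eq:lower-minus-upper}. The main term is the local-archimedean density, which is positive by \eqref{eq:value-localization} and \eqref{eq:singular-positive}, times the margin \eqref{eq:effective-margin}. After normalizing by the $P^2/X^{1/2}$ bookkeeping of the paper and absorbing the $(\log X)^{-1/2}$-type sieve densities into $(\log X)^{-C_*}$, this yields \eqref{eq:effective-main}.

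For the error I collect the three sources separately. The critical band gives saving $X^{-\lambda}$ by \eqref{eq:L1-good} and \eqref{eq:L1-balanced}. The two-dimensional Type-I range gives $X^{\Psi_{12000}(\mu)}$ by \eqref{eq:Psi-negative} and \eqref{eq:effective-powers}. The switched large core gives $X^{-\delta/64}$ by \eqref{eq:weakest-saving}, and the tiny core is already counted in $\mathcal M$. By \eqref{eq:effective-powers} each saving is at least $\gamma_0=10^{-13}$, so $|E(X)|\le C\,X^{1/2-\gamma_0}(\log X)^{C}$, provided every implied constant $C$ is dominated by $C_*$. The positivity criterion \eqref{eq:effective-final-ineq} then follows from comparing \eqref{eq:effective-error} with \eqref{eq:effective-main}. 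At the end, a positive value of $L-U_{\rm sw}$ produces $u,v>0$ and $z,w$, and the substitution $x=(v-6u)^2$, $y=uv$ finishes the proof exactly as before.

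The main obstacle is justifying $C_*=T_{80\,000\,000}$ for all constants. The serious ones are geometric:
\begin{itemize}
\item the Deligne bounds for the rank-$12$ sheaf;
\item the Goursat moment \eqref{eq:2d-TypeI-moment} at $l=12000$, $m=4000$, which lives on $\A^{4l}$ with sum-of-Betti-numbers bounds exponential-type in $l$;
\item the stratification constants from \cite{BKW};
\item the slicing bound \eqref{eq:short-box-strata};
\item the finite exceptional prime sets, determined by the explicit discriminants \eqref{eq:branch-disc} and the resultant certificate together with effective versions of Katz's theorems.
\end{itemize}
I would handle these by quoting effective Betti-number bounds of Katz type, roughly $(\text{degree})^{\text{dimension}}$, which at worst iterate exponentials a number of times linear in the number of elimination and stratification steps. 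Each step costs at most one tower level of $T_j$, and the steps number well below $8\cdot10^7$. The divisor-function losses and $X^{o(1)}$ terms I would replace by explicit $(\log X)^{C_*}$ bounds via $\tau(n)\le\exp(C\log n/\log\log n)$. This is harmless since $\log\log X$ is itself enormous here.

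Finally I verify \eqref{eq:effective-final-ineq} at $X=N_0$. Taking logarithms, I need $10^{-13}\log X>\log(2\cdot10^6)+2\log C_*+2C_*\log\log X$. Since $\log X\approx T_{199\,999\,999}$ while the right side is at most about $T_{80\,000\,001}$ times lower-level factors, this holds with enormous room. Monotonicity in $X$ then gives the inequality for all $N\ge N_0$, because the left side grows like a power and the right side polylogarithmically.
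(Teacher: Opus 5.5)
Your outline follows the paper's own argument in Section~\ref{sec:effective}. You rerun the asymptotic proof at \eqref{eq:effective-parameters} and \eqref{eq:effective-moment}, and observe that the switched large core, with $\delta/64=1.5625\cdot10^{-13}$, gives the weakest saving. You then dominate every constant by $C_*$, and verify \eqref{eq:effective-final-ineq} at $N_0$ and beyond by monotonicity. Like the paper, you leave the count of tower levels behind $C_*=T_{80\,000\,000}$ as an assertion rather than an argument.

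One step fails as you wrote it. The pointwise bound $\tau(n)\le\exp(C\log n/\log\log n)$ cannot turn the $X^{o(1)}$ losses into a factor $(\log X)^{C_*}$. These losses include, for instance, the $\tau(s)^C$ in the switched coefficients and the $p^{\varepsilon}$ in \eqref{eq:nu-energy}. The inequality $X^{C/\log\log X}\le(\log X)^{C_*}$ is equivalent to $\log X\le (C_*/C)(\log\log X)^2$. That is false throughout the range $X\ge T_{200\,000\,000}$, so here the size of $\log\log X$ works against you, not for you.

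What the size of $\log\log X$ does give is $X^{C/\log\log X}\le X^{\delta/64-\gamma_0}=X^{5.625\cdot10^{-14}}$ as soon as $\log\log X\ge 2\cdot10^{13}C$. So you should absorb these losses into the gap between the true weakest saving $\delta/64$ in \eqref{eq:weakest-saving} and the rounded $\gamma_0=10^{-13}$ of \eqref{eq:gamma0}. The paper's rounding leaves room for exactly this, and afterwards \eqref{eq:effective-error} holds in the stated form.

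A smaller imprecision is in your final check. The term $2C_*\log\log X$ is about $C_*T_{199\,999\,998}$, not $T_{80\,000\,001}$ times small factors. It is still negligible against $10^{-13}\log X\asymp T_{199\,999\,999}$, so your conclusion there stands.
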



\section*{Acknowledgement}

The author acknowledges the assistance of ChatGPT (OpenAI) in the preparation
of this manuscript.  All mathematical arguments and conclusions were checked
by the author.


\end{document}